\documentclass[11pt,reqno]{amsart}
\usepackage{xcolor}
\usepackage{amssymb, amsmath, amsthm,amscd,mathabx}
\usepackage[colorlinks,plainpages,backref,urlcolor=blue]{hyperref}
\hypersetup{colorlinks=true, citecolor=magenta}
\usepackage[alphabetic,lite]{amsrefs}
\usepackage{mathtools}
\usepackage{verbatim}
\usepackage{amscd}
\usepackage{tikz}
\usetikzlibrary{cd,calc,arrows}
\usepackage{mathrsfs}
\usepackage{upgreek}
\usepackage{cases}
\usepackage{comment}
\usepackage{colonequals}
\usepackage[shortlabels]{enumitem}
\usepackage[marginpar=2.4cm]{geometry}

\newcommand{\defi}[1]{{\emph{#1}}}
\newcommand{\bl}[1]{{\bf{#1}}}

\newcommand{\bw}{\bigwedge}

\renewcommand{\ll}{\lambda}

\newcommand{\oo}{\otimes}

\newcommand{\M}{\mathcal{M}}

\newcommand{\C}{\mathbb{C}}

\newcommand{\A}{\mathcal{A}}
\newcommand{\BB}{\mathcal{B}}
\newcommand{\NN}{\mathcal{N}}
\newcommand{\RR}{\mathcal{R}}
\newcommand{\XX}{\mathcal{X}}

\newcommand{\Sym}{\operatorname{Sym}}

\newcommand{\rank}{\operatorname{rank}}

\newcommand{\spn}{\operatorname{span}}
\newcommand{\Cent}{\operatorname{Cent}}

\newcommand{\bb}[1]{\mathbb{#1}}

\newcommand{\mc}[1]{\mathcal{#1}}

\newcommand{\op}[1]{\operatorname{#1}}

\def\lra{\longrightarrow}

\newcommand{\bwedge}{\mbox{\normalsize $\bigwedge$}}

\def\dot{\mathchar"013A}  
\newcommand{\hdot}{{\raise1pt\hbox to0.35em{\!\Huge $\dot$}}} 

\newtheorem{theorem}{Theorem}[section]
\newtheorem{lemma}[theorem]{Lemma}

\newtheorem{proposition}[theorem]{Proposition}
\newtheorem{corollary}[theorem]{Corollary}
\newtheorem*{theorem*}{Theorem}
\newtheorem*{problem*}{Problem}
\newtheorem*{claim*}{Claim}

\theoremstyle{definition}
\newtheorem{definition}[theorem]{Definition}
\newtheorem*{definition*}{Definition}

\newtheorem*{remark*}{Remark}
\newtheorem*{notation*}{Notation}

\theoremstyle{remark}

\newcounter{thecase}
\numberwithin{equation}{section}

\newcommand{\abs}[1]{\lvert#1\rvert}

\begin{document}

\title{Suciu's Conjecture on hyperplane arrangements}

\author[M. Aprodu]{Marian Aprodu}
\address{Marian Aprodu: 
Faculty of Mathematics and Computer Science, 
\hfill \newline\texttt{}
 \indent University of Bucharest,  Romania, and
\hfill \newline\texttt{}
  \indent Simion Stoilow Institute of Mathematics\hfill \newline\texttt{}
 \indent P.O. Box 1-764,
RO-014700 Bucharest, Romania
}
\email{{\tt marian.aprodu@imar.ro}}

\newcommand{\arxiv}[1]
{\texttt{\href{http://arxiv.org/abs/#1}{arXiv:#1}}}
\newcommand{\arx}[1]
{\texttt{\href{http://arxiv.org/abs/#1}{arxiv:}}
	\texttt{\href{http://arxiv.org/abs/#1}{#1}}}
\newcommand{\arxx}[2]
{\texttt{\href{https://arxiv.org/abs/#1.#2}{arxiv:#1.}}
	\texttt{\href{https://arxiv.org/abs/#1.#2}{#2}}}
\newcommand{\doi}[1]
{\texttt{\href{http://dx.doi.org/#1}{doi:#1}}}
\renewcommand{\MR}[1]
{\href{http://www.ams.org/mathscinet-getitem?mr=#1}{MR#1}}
\bibliographystyle{amsplain}

\author[G. Farkas]{Gavril Farkas}
\address{Gavril Farkas: Institut f\"ur Mathematik, Humboldt-Universit\"at zu Berlin \hfill \newline\texttt{}
\indent Unter den Linden 6,
10099 Berlin, Germany}
\email{{\tt farkas@math.hu-berlin.de}}

\author[C. Raicu]{Claudiu Raicu}
\address{Claudiu Raicu: Department of Mathematics,
University of Notre Dame \hfill \newline\texttt{}
\indent 255 Hurley Notre Dame, IN 46556, USA, and \hfill\newline\texttt{}
\indent Simion Stoilow Institute of Mathematics, \hfill\newline\texttt{}
\indent  P.O. Box 1-764, RO-014700 Bucharest, Romania}
\email{{\tt craicu@nd.edu}}

\author[A. Suciu]{Alexander I. Suciu}
\address{Alexander I. Suciu: Department of Mathematics,
Northeastern University \hfill \newline\texttt{}
\indent Boston, MA, 02115, USA,
and
\hfill \newline\texttt{}
  \indent Simion Stoilow Institute of Mathematics\hfill \newline\texttt{}
 \indent P.O. Box 1-764,
RO-014700 Bucharest, Romania}
\email{{\tt a.suciu@northeastern.edu}}

\begin{abstract}
    We prove that the resonance of any hyperplane arrangement is separable. Using the main result from \cite{AFRSarx} this proves Suciu's conjecture on the fundamental group of every hyperplane arrangement, in its most effective form.
\end{abstract}

\maketitle

\section{Introduction}
\label{sect:main}

For a complex hyperplane arrangement $\A$ in $\mathbb C^m$, it is a fundamental question to determine which topological invariants of the complement 
$$M(\A)\coloneqq \mathbb C^m\setminus \bigcup_{H\in \A} H$$
are determined by the intersection lattice (matroid) $L(\A)$ of the arrangement. Building on work of Brieskorn and Arnold, it was shown in \cite{OS} that the cohomology algebra $H^*\bigl(M(\A), \mathbb C\bigr)$ is isomorphic to the \emph{Orlik-Solomon algebra} $A(\A)$, which is an explicit quotient of the exterior algebra $E(\A)$ freely generated in degree one by elements $\{e_H\}_{H\in \A}$. In particular, $H^*(M(\A), \mathbb C)$ is manifestly matroidal. The fundamental group of $M(\A)$
$$G(\A)\coloneqq\pi_1\bigl(M(\A)\bigr)$$ is on the other hand not matroidal \cite{Ryb}, in the sense that there exist arrangements having the same underlying matroid,  whose fundamental groups are however not isomorphic. This fact led to the question of identifying the key attributes of the fundamental group of an arrangement which are matroidal, and to compute these \emph{explicitly} in matroidal terms.

\vskip 4pt

A prime example is provided by the \emph{Chen ranks} of the group $G(\A)$ of the arrangement. Given a finitely generated group $G$, one considers its lower central series $$G=\Gamma_1(G)\supseteq \cdots \supseteq\Gamma_q(G)\supseteq \Gamma_{q+1}(G)\supseteq \cdots ,$$
obtained by taking successive commutators. The metabelian quotient $G_{\mathrm{met}}$ of $G$ is the quotient $G/G''$, where $G'=[G, G]$ and $G''=[G', G']$. If $G$ is the fundamental group of a connected, finite CW-complex $X$, then one can think of $G'/G''$ as the first homology of the universal abelian cover of $X$. The \emph{Chen ranks}  $\theta_q(G)$ are defined as the ranks of the quotients of the lower central series of the metabelian quotient of $G$, that is, 
$$\theta_q(G)\coloneqq\rank \Gamma_q(G_{\mathrm{met}})/\Gamma_{q+1}(G_{\mathrm{met}}).$$
Even in the case of the free group $F_m$, the metabelian quotient encodes a surprising amount of information.  Its Chen ranks have been first computed by K. T. Chen \cite{Ch-ann} and one has $$\theta_q(F_m)=(q-1)\binom{m+q-2}{q}.$$

\vskip 5pt

Based on a large number of examples, Suciu \cite{Su-conm} formulated a precise conjecture concerning the large degree formula for the Chen ranks of any hyperplane arrangement in terms of the associated \emph{resonance variety}. For hyperplane arrangements, this variety is a union of linear spaces which can be described using the beautiful theory of \emph{multinets}. These are, as we shall explain,  matroidal structures associated to any arrangement.  The conjecture has been one of the guiding problems of the field and the Chen invariants of many classes of arrangements have been computed, see  \cites{CSc-adv, CS-tams, DPS-duke, PS, SS-tams02, SS-tams, AFRSarx}. The aim of this paper is to explain how using the main results of \cites{AFRS1, AFRSarx} for arbitrary Koszul modules, that is, those not necessarily attached to an arrangement, we obtain a definitive version of Suciu's Conjecture for all arrangements.

\vskip 4pt

Before formulating Suciu's Conjecture, we recall the definition of the \emph{resonance variety} of an arrangement $\A$, as being the locus
\begin{equation}
\label{eq:def-resonance}
\RR(\A)\coloneqq\Bigl\{a\in H^1\bigl(M(\A), \mathbb C\bigr) : \exists b\in
H^1\bigl(M(\A), \mathbb C\bigr) \mbox{ such that } 0\neq a\wedge b\in \ker(\cup) \Bigr\}\cup \{0\},
\end{equation}
where 
\[
\cup\colon \bwedge^2 H^1\bigl(M(\A),\C\bigr)\longrightarrow H^2\bigl(M(\A), \C\bigr)
\]
is the cup-product map, see also (\ref{eq:defK-from-A}) below. The cohomology $H^*\bigl(M(\A), \mathbb C\bigr)$ being matroidal, the resonance variety is a matroidal invariant of $\A$ as well. The resonance $\RR(\A)$ is a union of linear subspaces which can be explicitly described in terms of multinets on subarrangements of $\A$. Following Falk and Yuzvinsky \cite{FY}, a  $k$-multinet of an arrangement $\A$ corresponds to a partition of $\A$ into $k$ blocks $\A_1$, \ldots, $\A_k$ and an assignment of multiplicities $m\colon \A\rightarrow \mathbb Z_{>0}$ satisfying a number of axioms, see \S\ref{sec:multinets}. We state the main result of this paper:

\begin{theorem}\label{thm:main}
Let $\A$ be a hyperplane arrangement. If $h_m(\mc{A})$ denotes the number of $m$-dimensional components of the resonance of $\mc{A}$, then for  $q\geq \mathrm{max}\{2, |\A|-1\}$, one has 
$$\theta_q\bigl(G(\A)\bigr)=\sum_{m\geq 2} h_m(\A) \cdot \theta_q(F_m)=\sum_{m\geq 2} h_m(\A)\cdot (q-1)\binom{m+q-2}{q}.$$
\end{theorem}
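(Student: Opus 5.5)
The plan is to reduce Theorem~\ref{thm:main} to the general result on Koszul modules from \cites{AFRS1, AFRSarx}. Recall that for a finitely generated group $G$ with $V=H^1(G,\C)$ and $K^\perp=\ker(\cup)\subseteq \bwedge^2 V$, one forms the Koszul module $W(V,K)$. It is classical (Papadima--Suciu, building on Massey) that when $G$ is $1$-formal the Chen ranks are computed by the Hilbert function of the Koszul module:
\[
\theta_{q}(G)=\dim W_{q-2}(V,K)\qquad (q\ge 2).
\]
Arrangement complements are formal, so this applies to $G(\A)$, and the cup product is read off from the Orlik--Solomon algebra $A(\A)$. Thus $\theta_q(G(\A))$ is the Hilbert function of $W(\A)$ in degree $q-2$, and the resonance variety $\RR(\A)$ of \eqref{eq:def-resonance} is the support of $W(\A)$.

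The main result of \cite{AFRSarx} states the following. If the resonance $\RR(V,K)$ is \emph{separable}, i.e.\ a finite union of linear subspaces $L_1,\dots,L_r$ such that each $L_i$ is isotropic ($\bwedge^2 L_i\subseteq K^\perp$), $L_i\cap L_j=0$ for $i\ne j$, and the scheme structure is reduced/transversal in the appropriate sense, then
\[
\dim W_q(V,K)=\sum_i \dim W_q\bigl(L_i,\bwedge^2 L_i^\vee\bigr)=\sum_i \theta_{q+2}(F_{\dim L_i})
\]
for $q\ge \dim V-3$. With $\dim V=|\A|$, this gives exactly the stated range $q\ge \max\{2,|\A|-1\}$ after the shift by $2$. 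So the theorem follows once separability of $\RR(\A)$ is established, and that is the real content.

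For separability I would use the multinet description recalled in \S\ref{sec:multinets}. Each component of $\RR(\A)$ has the form $L=\pi^*H^1(\text{punctured }\mathbb{P}^1)$ coming from a $k$-multinet on a subarrangement $\mc B\subseteq \A$; for a local component (a flat of multiplicity $k+1$) it is the analogous pencil space. Such an $L$ has dimension $k-1$ and is isotropic, because it is pulled back from a curve. Distinct components meet only at $0$ by the Libgober--Yuzvinsky theorem. The remaining condition is the infinitesimal one: one needs $\bwedge^2 L_i + \bwedge^2 L_j+\dots$ to embed in $K^\perp$ with the linear-algebraic transversality of \cite{AFRSarx}. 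Concretely, the subspaces $\bwedge^2 L_i$ should be independent modulo the part of $K^\perp$ not coming from them, and the tangent cone of the support should be reduced along each $L_i$.

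I expect this transversality step to be the main obstacle. I would attack it in two ways. First, for each component I would use the orbifold fibration $M(\mc B)\to \mathbb{P}^1\setminus\{k \text{ points}\}$ to produce the splitting $\bwedge^2 L_i\hookrightarrow \ker(\cup)$. Second, I would show that for $a\in L_i\setminus 0$ the set $\{b: a\wedge b\in\ker\cup\}$ equals $L_i$ exactly, with no extra first-order deformations; this is a linear-algebra computation in the Orlik--Solomon relations of the subarrangement, using the multinet axioms (equal sums of multiplicities over blocks, and each flat in the base locus meeting all blocks equally). Independence of the $\bwedge^2 L_i$ across components would then follow, because $\bwedge^2 L_i$ is supported on flats of $\mc B_i$, and distinct components give different supports in the decomposition $A^2=\bigoplus_{X} A^2_X$ over rank-$2$ flats.
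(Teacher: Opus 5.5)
Your reduction matches the paper's proof of Theorem~\ref{thm:main}: by \eqref{eq:chen_arr}, $\theta_q(G(\A))=\dim W_{q-2}(\A)$, and \eqref{eq:chenrk_gen} from \cite[Theorem 1.1]{AFRSarx} applies for $q-2\ge |\A|-3$, i.e.\ $q\ge |\A|-1$. (The local model there is $W_q(\overline V_i,0)$, i.e.\ $K=0$. Putting $\bwedge^2$ in the second slot confuses $K$ with $K^\perp$, although your final count $\theta_{q+2}(F_{\dim L_i})$ is right.)

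However, you have not identified the hypothesis of that theorem. It is \emph{strong isotropy}: each component $P$ of $\RR(\A)$ is isotropic and
\[
\bigl(P\wedge V^{\vee}\bigr)\cap I^2(\A)\subseteq \bwedge^2 P .
\]
This is a condition on each component separately. There is no tangent-cone or reducedness requirement, and no independence of the $\bwedge^2 L_i$ across components. That independence is in fact false. For the braid arrangement, $I^2(\A)$ is $4$-dimensional, while there are five components (four local, one essential), each with $\dim\bwedge^2 L_i=1$. Moreover, $\bwedge^2$ of the essential component lies in $\bigoplus_X\bwedge^2P_X$ over the four triple points, so your ``different supports'' argument fails.

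The real gap is in the verification of separability. Proving $\{b:a\wedge b\in I^2(\A)\}=L_i$ for every $a\in L_i\setminus 0$ only controls \emph{decomposable} elements of $I^2(\A)$. It yields $\Cent_{\A}(P)=P$, which is exactly the maximality input the paper already takes from Theorem~\ref{thm:multinet-spaces}(2) via Lemma~\ref{lem:rigidity}. Separability concerns sums $\sum_j p_j\wedge g_j$ whose individual terms need not lie in $I^2(\A)$, and the pointwise statement does not imply it. For $V^{\vee}=\spn\{p_1,p_2,w_1,w_2\}$ and $K^{\perp}=\spn\{p_1\wedge p_2,\ p_1\wedge w_1+p_2\wedge w_2\}$, one checks that $\RR(V,K)=P=\spn\{p_1,p_2\}$ and that $\{b:a\wedge b\in K^\perp\}=P$ for all $a\in P\setminus 0$. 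Yet $p_1\wedge w_1+p_2\wedge w_2\in (P\wedge V^{\vee})\cap K^{\perp}\setminus\bwedge^2P$.

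What is missing is the factorization $(P\wedge V^{\vee})\cap I^2(\A)=P\wedge \Cent_{\A}(P)$ of Theorem~\ref{thm:factorization}. Write $\omega=\sum_i u_i\wedge g_i$ with $\sum_i g_i=0$. The local analysis (applying $\partial$, applying $\partial_X$ for $X\in\XX$, and using Lemma~\ref{lem:local-wedge} at $Y\notin\XX$) gives $\partial g_i=0$, $\partial_X g_i=0$, and $(g_i)_Y\propto(u_i)_Y$. But the last condition holds only for flats $Y$ meeting the single block $\BB_i$.

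The key step is to propagate this to all blocks:
\begin{enumerate}[label=\textup{(\alph*)},itemsep=2pt]
\item A counting argument with the multinet axioms (Lemma~\ref{lem:outside}) shows that every $H\notin\BB$ in the support of $g_i$ meets each $L\in\BB$ in a base point.
\item The incidence system then gives $\bl{Q}_j(g_i)_{\BB_j}=0$.
\item Connectivity of $\Gamma_j$, through the one-dimensional kernel of its weighted Laplacian, forces $(g_i)_{\BB_j}\in\C\,m_j$ for every $j$.
\end{enumerate}
Only then does $g_i\in\Cent_{\A}(P)=P$ follow. Neither the orbifold fibration (which only reproves isotropy) nor the pointwise centralizer computation supplies this step.
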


Theorem \ref{thm:main} has been formulated as a conjecture in \cite{Su-conm} for $q\gg 0$. Theorem \ref{thm:main} has been proved in its effective form for all arrangements with only 
double and triple points in \cite[Theorem 9.9]{AFRSarx}. We refer to \cite{AFRSarx} for a detailed history of the problem and for a discussion of previous work done on the subject. 

\vskip 4pt

The fundamental algebraic object behind Theorem \ref{thm:main} is the \emph{Koszul module} of the arrangement $\mc{A}$, whose affine support defines the resonance variety $\RR({\mc{A}})$. These objects have been studied in \cite{AFPRW1, AFRS1, AFRSarx}, building on important earlier work in \cites{PS-imrn, PS-crelle}. Let $V\coloneqq H_1\bigl(M(\A), \C\bigr)$, hence 
$V^{\vee}=H^1\bigl(M(\A), \C\bigr)=\op{Span}{\{e_H\: H\in \A\}}$. We denote by
$S\coloneqq \Sym(V)$ the symmetric algebra on $V$. If 
\begin{equation}\label{eq:defK-from-A}
K\coloneqq\mbox{Im}\Bigl\{\cup\colon \bwedge^2 H^1\bigl(M(\A),\C\bigr)\longrightarrow H^2\bigl(M(\A), \C\bigr)\Bigr\}^{\vee},
\end{equation}
the \defi{Koszul module}\/ $W(\A)$ is 
the graded $S$-module defined as the homology of the complex
\begin{equation}
\label{eq:def-WVK}
\begin{tikzcd}[column sep=22pt]
K \oo S \ar[r, "\delta_2"] 
& V\oo S(1) \ar[r, "\delta_1"] & S(2),
\end{tikzcd}
\end{equation}
where $\delta_2$ is the Koszul differential 
$(u\wedge v)\otimes f \stackrel{\delta_2}\mapsto v\otimes (u\cdot f)-u\otimes (v\cdot f)$, for 
$u, v\in V$ and $f\in S$, while $\delta_1$ is the multiplication map. The fact that that the support of $W(\A)$ is precisely the resonance variety $\RR(\A)$ defined by (\ref{eq:def-resonance}) is established in \cite[Lemma 2.4]{PS-crelle}.

\vskip 4pt

The degree $q$ part 
$W_q(\A)$ of the Koszul module $W(\A)$ is then related to the Chen ranks of $G(\A)$. 
Precisely, it is proved in \cites{PS-imrn, PS-crelle} that
\begin{equation}\label{eq:chen_arr}
\theta_{q+2}(G(\A))=\mbox{dim }  W_q(\A),
\end{equation}
for each $q\geq 0$. This makes the task of determining the Chen ranks of arrangement amenable to algebro-geometric methods. 


\vskip 4pt

Denoting by $I^2(\A)=K^{\perp}\subseteq\bigwedge^2 V^{\vee}$ the degree $2$ part of the Orlik--Solomon ideal of $\A$, a  linear subspace $P\subseteq V^{\vee}$ is said to be \emph{isotropic} if $\bw^2 P\subseteq I^2(\A)$, and \emph{separable} if
\begin{equation}
\label{eq:separable}
\bigl(P\wedge V^{\vee}\bigr)\cap I^2(\A)\subseteq\bwedge^2 P .
\end{equation}
A linear component of the resonance $\RR(\A)$ that is both isotropic and separable is called \emph{strongly
isotropic}; we say that $\RR(\A)$ is isotropic, separable or strongly isotropic if it is linear and each of its components is isotropic, separable or strongly isotropic, respectively. Separability and strong isotropicity were introduced and studied in \cite{AFRS1} in a more general, purely algebraic context. 
 
It follows from \cite{DPS-duke} that the resonance $\RR(\A)$ of any arrangement is a union of linear subspaces, moreover each of these subspaces is isotropic. Furthermore, via \cite{FY} components of $\RR(\A)$ are described by multinets $\M$ on subarrangements $\mathcal{B}$ of $\A$, which makes the resonance $\RR(\A)$ and the numerical invariants $h_m(\A)$ appearing in Theorem \ref{thm:main} \emph{explicitly}  matroidal. We now state the main result of this note:

\begin{theorem}\label{thm:separable}
The resonance variety of each hyperplane arrangement $\A$ is strongly isotropic.
\end{theorem}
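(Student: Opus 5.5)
Since linearity and isotropy of every component of $\RR(\A)$ are already known from \cite{DPS-duke}, the only thing left is separability (\ref{eq:separable}) for each component $P$. I will use the description from \cite{FY}. Each component $P$ comes from a $k$-multinet $\M$ on a subarrangement $\BB\subseteq\A$, with blocks $\BB_1,\dots,\BB_k$ and multiplicities $m$. The space $P$ has dimension $k-1$ and is spanned by the differences $u_i-u_j$, where $u_i=\sum_{H\in\BB_i} m_H e_H$. If $k\geq 3$ and the multinet is reduced to a local component (all of $\BB$ through one codimension-two flat $X$), then $P=\{\sum_{H\supseteq X} c_He_H : \sum c_H=0\}$.

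My first step is to describe $I^2(\A)$ concretely. In degree two the Orlik--Solomon ideal is spanned by the elements $\partial(e_He_{H'}e_{H''})$, where $H,H',H''$ pass through a common codimension-two flat. So $I^2(\A)=\bigoplus_{X} I^2_X$ over rank-two flats $X$ with $|\A_X|\geq 3$. Here $I^2_X\subseteq \bw^2\spn\{e_H : H\supseteq X\}$ is the kernel of the cup product restricted to $\A_X$, and it has dimension $\binom{|\A_X|-1}{2}$. The sum is direct because distinct rank-two flats share at most one hyperplane, so the spaces $\bw^2 \spn\{e_H:H\supseteq X\}$ meet only in zero.

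Next, take $\omega\in (P\wedge V^\vee)\cap I^2(\A)$ and write $\omega=\sum_X\omega_X$ with $\omega_X\in I^2_X$. The plan is to show each $\omega_X$ lies in $\bw^2 P$, sorting flats by how they meet $\BB$. The multinet axioms say that every flat $X$ with $\A_X\cap\BB\neq\emptyset$ either lies in one block (\emph{not} a base point) or meets every block with equal weighted count $n_X=\sum_{H\in\BB_i,\,H\supseteq X}m_H$ (a base point). I will then write $\omega=\sum_j p_j\wedge f_j$ with $p_j$ running over a basis of $P$. Each $f_j$ splits as an $E_\BB$-component plus a component on the hyperplanes outside $\BB$. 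Contracting with the coordinate functionals $e_H^*$ for $H\notin\BB$ produces elements of $P$ after projection. Writing $\omega\in P\wedge V^\vee$ means that for each $\varphi\in V$ vanishing on $P$, the contraction $\iota_\varphi\omega$ lies in $P$. I will test this against suitable $\varphi$ supported on single flats.

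The key local computation is the following. Fix a flat $X$ and project $\omega$ onto $\bw^2\spn\{e_H:H\supseteq X\}$. At a non-base-point $X$ meeting only block $\BB_i$ (or meeting no block), $P$ restricts to at most the span of a single vector, or to zero. I then want to show that an element of $I^2_X$ of the form $p\wedge f$ with such restricted $p$ must vanish. This works because $I^2_X$ contains no decomposable nonzero element with a factor supported on a proper subset... which has to be checked carefully. At a base point $X$, the restriction of $P$ is the span of the vectors $u_i|_X$. These are linearly independent, and their weighted sums agree. Hence $P|_X\wedge\spn\{e_H:H\supseteq X\}$ meets $I^2_X$ only in $\bw^2(P|_X)$, by an explicit dimension count using the fact that $I^2_X$ is the kernel of $\partial$ on $\bw^2$. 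After that I still have to glue these local pieces: the components $\omega_X$ in $\bw^2P|_X$ must assemble to an element of $\bw^2P$. I expect this gluing, together with the projection bookkeeping (since $\omega\in P\wedge V^\vee$ does not obviously localize), to be the main obstacle. My first attempt will use the connectivity of the multinet and the uniformity of $n_X$ across blocks, which is exactly what makes the $u_i-u_j$ global. If that fails, I would fall back on an argument by induction on $|\A\setminus\BB|$, deleting hyperplanes outside $\BB$.
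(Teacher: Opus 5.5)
\textbf{There is a genuine gap, and the key local claim of your plan is false.} At a base point $X\in\XX$, the projection $P|_X$ is spanned by the differences $(u_i)_X-(u_j)_X$, not by the $(u_i)_X$ themselves. Hence $P|_X\subseteq P_X=\ker\partial_X$.

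By the Leibniz rule (as in Lemma~\ref{lem:local-wedge}), take an element $\sum_j a_j\wedge b_j$ with $(a_j)$ a basis of $P|_X$ and $b_j\in V_X^\vee$. It lies in $I^2(\A_X)$ if and only if $\partial_X b_j=0$ for all $j$. So $(P|_X\wedge V_X^\vee)\cap I^2(\A_X)=P|_X\wedge P_X$. This is strictly larger than $\bw^2(P|_X)$ whenever $|\A_X|>k$, for instance when some block has two hyperplanes through $X$, or some $H\in\A\setminus\BB$ passes through $X$.

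No dimension count can rescue this. Write $\omega=\sum_i u_i\wedge g_i$ with $\sum_i g_i=0$. The local data at base points only give $\partial_X g_i=0$, i.e.\ $(g_i)_X\in P_X$. At non-base flats $Y$ meeting $\BB_i$, your conclusion $\omega_Y=0$ is correct, but your stated reason is wrong. $I^2(\A_Y)$ does contain decomposables with a factor on a proper subset, e.g.\ $(e_{H_1}-e_{H_2})\wedge(e_{H_1}-e_{H_3})$. The actual reason is that $\partial_Y(u_i)_Y>0$, so Lemma~\ref{lem:local-wedge}\ref{lw1} forces $(g_i)_Y$ to be proportional to $(u_i)_Y$.

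\textbf{More fundamentally, your argument never uses that $P$ is a component.} It relies only on the multinet axioms, connectivity, and the uniformity of $n_X$. For general multinet subspaces, separability is false. Let $\A$ consist of four lines $H_1,\dots,H_4$ through a point $X$, and take the $3$-multinet on $\{H_1,H_2,H_3\}$. Then $(e_{H_2}-e_{H_1})\wedge(e_{H_4}-e_{H_1})$ lies in $(P_\NN\wedge V^\vee)\cap I^2(\A)$ but not in $\bw^2P_\NN$.

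The missing ingredient is therefore the maximality of components among isotropic subspaces (Theorem~\ref{thm:multinet-spaces}(2)). By Lemma~\ref{lem:rigidity}, this means $\Cent_\A(P)=P$.

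The paper separates the two issues:
\begin{enumerate}
\item For every multinet it shows $(P\wedge V^\vee)\cap I^2(\A)=P\wedge\Cent_\A(P)$, by proving that each $g_i$ above lies in $\Cent_\A(P)$.
\item This requires a global propagation step. The conditions at non-base flats force every $H\notin\BB$ with $(g_i)_H\neq 0$ to meet all of $\BB$ in base points (Lemma~\ref{lem:outside}, via \eqref{eq:deg1}).
\item Then $\partial g_i=0$ and $\partial_X g_i=0$ for $X\in\XX$ give $\bl{Q}_j\,(g_i)|_{\BB_j}=0$. The kernel of $\bl{Q}_j$ is $\C\, m_j$ by connectivity of the weighted Laplacian graph $\Gamma_j$, see \eqref{eq:kerQ}.
\item Only after this does it invoke maximality.
\end{enumerate}

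Your ``gluing'' step would have to contain all of this. Your fallback induction on $|\A\setminus\BB|$ would have to track how the centralizer changes as hyperplanes outside $\BB$ are added. The four-lines example shows exactly where it breaks: $P_\NN$ is a separable component for $\{H_1,H_2,H_3\}$, but adding $H_4$ destroys both properties.
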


We explain a few facts about the proof of Theorem \ref{thm:separable}. In \cite[Proposition 9.4]{AFRSarx}, we identified three types of equations 
$$\Bigl\{\partial=0, \partial_X=0, \ \ \mbox{ for } X\in \mathcal{X} \ \mbox{ and } \ \partial_H=0, \mbox{ for } H\notin \mathcal{B}\Bigr\}$$ for the second exterior power $\bw^2P_{\mathcal{N}}$ of any irreducible component $P_{\mathcal{N}}$ of the resonance induced by a multinet $\mathcal{N}$ on a subarrangement $\mathcal{B}\subseteq \mc{A}$. We then proved in \cite[Theorem 2.5]{AFRSarx} that the first two types of equations also hold true for the subspace $(P\wedge V^\vee)\cap I^2(\A)$. 
The equations of the third type were verified in various special situations, cf. \cite[\S9]{AFRSarx}, and it is precisely their verification for maximal isotropic subspaces that obstructed the case of general arrangements treated here. We are able to avoid these equations, by applying two new ideas. 

\vskip 3pt

The first is the use of centralizers, which were introduced  in \cite{AFRW}.  We extend this notion  to subspaces in Definition~\ref{def:cent}. Maximality becomes then the condition that a subspace coincides with its centralizer. Lemma~\ref{lem:rigidity}, and Proposition~\ref{prop:propagation} yield then sufficient conditions for an element to be contained in the centralizer of a subspace associated to a multinet, revealing a propagation phenomenon: conditions imposed at a single block already force the required behaviour at all of them.  The second idea is that, instead of focusing on separability for maximal isotropic subspaces, we establish a weaker condition involving centralizers and which makes sense for the subspace attached to an arbitrary multinet, see Theorem~\ref{thm:factorization}. For maximal isotropic subspaces, this condition specializes to separability, and every positive-dimensional component of the resonance is such a subspace, by Theorem~\ref{thm:multinet-spaces}.

\medskip

The proof of Suciu's Conjecture (Theorem \ref{thm:main}) now follows by combining \cite[Theorem 1.1]{AFRSarx} and Theorem \ref{thm:separable}. Indeed, the main result of \cite{AFRSarx} provides a formula for the dimensions $W_q(V,K)$ when $q\geq \mbox{dim}(V)-3$, for any Koszul module whose resonance variety $\RR(V,K)$ is strongly isotropic. Coupled with (\ref{eq:chen_arr}) this provides the desired formula for $\theta_q\bigl(G(\A)\bigr)$. Showing that the hypotheses of \cite[Theorem 1.1]{AFRSarx} hold for an arbitrary arrangement is the main goal of this paper. Important previous work on the resonance of an arrangement in connection to Suciu's Conjecture has been done by Cohen--Schenck \cite{CSc-adv}. Partial results on the strong isotropicity of
$\RR(\A)$ under further restrictions have been established in \cite[Proposition 9.8, Theorem 9.9]{AFRSarx}.

\section{Preliminaries}
In this section, we review notions and known results to be used throughout this note.

\subsection{Chen ranks of Koszul modules}
We recall basic facts concerning Koszul modules following \cite{AFRS1} and \cite{AFRSarx}. We fix an $n$-dimensional complex vector space $V$ and set $S:=\mbox{Sym}(V)$. For a linear subspace $K\subseteq \bigwedge^2 V$, we form the graded $S$-module $W(V,K)$, whose degree $q$ component equals
\[
W_q(V,K)=\mathrm{homology}\Bigl\{\!\!
\begin{tikzcd}[column sep=22pt]
 K\otimes \Sym^q V\ar[r,"\delta_2"] &
 V\otimes \Sym^{q+1} V\ar[r,"\delta_1"] & \Sym^{q+2} V   
\end{tikzcd}
\!\!\Bigr\}.
\]
Here the differentials are given by $\delta_2\bigl((u\wedge v)\otimes f\bigr):=v\otimes (uf)-u\otimes (vf)$, whereas $\delta_1(u\otimes f):=uf$ is the multiplication map.

\vskip 4pt

Associated to $(V, K)$, we also have the \emph{resonance variety} defined as the locus 
\begin{equation}
\label{eq:def-resonance-bis}
\RR(V,K)\coloneqq\Bigl\{a\in V^\vee : \text{ there exists $b\in
V^\vee$ such that $a\wedge b\in K^\perp\setminus \{0\}$} \Bigr\}\cup \{0\}.
\end{equation}
It is shown in \cite{PS-crelle} that $\mbox{supp } W(V,K)=\RR(V,K)$. Koszul modules with vanishing resonance are treated in \cites{AFPRW1, AFPRW2}, whereas Koszul modules with linear resonance are discussed in \cite{AFRSarx}. We are interested in Koszul module whose resonance variety is a union of linear subspaces 
$$\RR(V,K)=\overline{V}^{\vee}_1\cup \cdots \cup \overline{V}^{\vee}_k,$$
where $\overline{V}_i^{\vee}$ is a linear subspace of $V^{\vee}$ corresponding to a surjection $V\twoheadrightarrow \overline{V}_i$. If, in addition,  $\RR(V,K)$ is strongly isotropic, we show in \cite[Theorem 1.1]{AFRSarx}  the following formula for the dimension of the graded pieces of $W(V,K)$:
\begin{equation}\label{eq:chenrk_gen}
\mbox{dim } W_q(V,K)=\sum_{i=1}^k \mbox{dim } W_q(\overline{V}_i, 0), \ \ \mbox{ for all } q\geq n-3.
\end{equation}
Throughout our paper we will be concerned with the case when $\A$ is a hyperplane arrangement,  $V=H_1\bigl(M(\A), \C\bigr)$ and $K$ is defined by \eqref{eq:defK-from-A}. We write $W(\A):=W(V,K)$. We shall explain how Theorem~\ref{thm:main} (that is, Suciu's Conjecture in its effective form), can be understood as a special case of formula \eqref{eq:chenrk_gen} established in \cite{AFRSarx}.

\subsection{Calculations in  the Orlik--Solomon algebra}
\label{sec:rank-two}
Given a hyperplane arrangement $\A$, we denote by $L(\A)$ the intersection lattice of all flats $\bigcap_{H\in \BB}H$, for some subarrangement $\BB$ of $\A$. Recall that the rank of a flat is its codimension. For a rank-two flat $X\in L_2(\A)$ we define 
\[\A_X:=\bigl\{H\in \A: X\subseteq H\bigr\}, \text{ and }
V^{\vee}_X:=\spn\bigl\{e_H: H\in \A_X\bigr\}\subseteq H^1\bigl(M(\A), \C\bigr)=V^{\vee}.\] 
For a vector $v=\sum_H v_H\cdot  e_H\in V^{\vee}$, we write
\[
v_X := \sum_{H\in\A_X} v_H\cdot e_H \quad\text{and}\quad \partial_X v:=\sum_{H\in \A_X} v_H .
\]
Following \cite[\S 9]{AFRSarx}, we use the same symbol $\partial_X$ for the degree $-1$ derivation of $\bw V^{\vee}_X$
determined by $\partial_X(e_H)=1$, for all $H\in \A_X$.  We write $\partial$ for the derivation of
$\bw V^{\vee}$ with $\partial(e_H)=1$ for all $H\in \A$.
For a flat $X\in L_2(\A)$, we consider the hyperplane $P_X\subseteq V_X^{\vee}$ defined by
\begin{equation}\label{eq:defPX}
    P_X = \ker\bigl\{\partial_X:V_X^{\vee}\lra\bb{C}\bigr\} = \op{Span}\left\{e_H-e_{H'}:H,H'\in\A_X \right\}
\end{equation}

\noindent There is a direct sum decomposition $\bw^2 V^{\vee} = \bigoplus_{X\in L_2(\A)} \bw^2 V_X^{\vee}$ with respect to which the quadratic part of the Orlik-Solomon ideal decomposes as follows, see also \cite{OS}
\begin{equation}
\label{eq:brieskorn}
I^2(\A)=\bigoplus_{X\in L_2(\A)} I^2(\A_X), \qquad
I^2(\A_X)=\bw^2 P_X=\ker\bigl\{\partial_X\colon \bwedge^2 V^{\vee}_X\longrightarrow  V^{\vee}_X\bigr\},
\end{equation}
where $I^2(\A_X)$ is non-zero precisely when $\abs{\A_X}\geq 3$. In particular, we have the following equivalence
\begin{equation}\label{eq:local-to-global}
a\wedge b\in I^2(\A) \text{ if and only if }a_X \wedge b_X \in I^2(\A_X), \text{ for all }X\in L_2(\A).
\end{equation}
The Leibniz rule and \eqref{eq:brieskorn} imply the following, see \cite[Lemmas 3.1--3.3]{LY00}.

\begin{lemma}
\label{lem:local-wedge}
Let $a,b\in V^{\vee}$ and $X\in L_2(\A)$. Then
\begin{equation}
\label{eq:wedge-criterion}
a_X\wedge b_X\in I^2(\A_X)
\iff
(\partial_X a)\, b_X=(\partial_X b)\, a_X.
\end{equation}
In particular:
\begin{enumerate}[label=\textup{(\roman*)},itemsep=2pt]
\item \label{lw1} 
If $\partial_X a\ne 0$, then $a_X\wedge b_X\in I^2(\A_X)$ if and only
if $b_X$ is proportional to $a_X$.
\item \label{lw2}
If $a_X\ne 0$ and $\partial_X a=0$, then
$a_X\wedge b_X\in I^2(\A_X)$ if and only if $\partial_X b=0$.
\end{enumerate}
\end{lemma}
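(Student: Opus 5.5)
The criterion \eqref{eq:wedge-criterion} is a computation inside $\bw^2 V_X^{\vee}$, followed by two easy specializations. By \eqref{eq:brieskorn}, $I^2(\A_X)=\ker\bigl\{\partial_X\colon \bw^2 V_X^{\vee}\to V_X^{\vee}\bigr\}$, so $a_X\wedge b_X\in I^2(\A_X)$ exactly when $\partial_X(a_X\wedge b_X)=0$.

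\textbf{The criterion.} Since $\partial_X$ is a degree $-1$ derivation with $\partial_X(e_H)=1$ for $H\in\A_X$, on degree one elements it acts as the functional $v\mapsto \partial_X v$. The graded Leibniz rule then gives
\[
\partial_X(a_X\wedge b_X)=(\partial_X a)\, b_X-(\partial_X b)\, a_X .
\]
We also have $\partial_X(a_X)=\partial_X a$, because $\partial_X$ only sees the coordinates indexed by $\A_X$. Hence $a_X\wedge b_X\in I^2(\A_X)$ if and only if $(\partial_X a)\, b_X=(\partial_X b)\, a_X$, which is \eqref{eq:wedge-criterion}.

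\textbf{Part (i).} Suppose $\partial_X a\neq 0$.
\begin{itemize}
\item If the criterion holds, then $b_X=\frac{\partial_X b}{\partial_X a}\, a_X$, so $b_X$ is proportional to $a_X$.
\item Conversely, suppose $b_X$ is proportional to $a_X$, say $b_X=\lambda a_X$. Note that $a_X\neq 0$, since $\partial_X a_X\neq 0$. Then $\partial_X b=\lambda\,\partial_X a$, and both sides of the criterion equal $\lambda(\partial_X a)\,a_X$. (Alternatively, $a_X\wedge b_X=0$ lies in $I^2(\A_X)$ trivially.)
\end{itemize}

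\textbf{Part (ii).} Suppose $a_X\neq 0$ and $\partial_X a=0$. The criterion reduces to $0=(\partial_X b)\, a_X$. Since $a_X\neq 0$, this holds if and only if $\partial_X b=0$.

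\textbf{Main obstacle.} There is essentially none. The only point needing care is the sign convention in the graded Leibniz rule, and it is harmless because the criterion is an equality of two terms.
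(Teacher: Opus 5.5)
Your proof is correct and follows the same route as the paper, which obtains the criterion directly from the Leibniz rule for the derivation $\partial_X$ together with the description $I^2(\A_X)=\ker\bigl\{\partial_X\colon \bw^2 V_X^{\vee}\to V_X^{\vee}\bigr\}$ in \eqref{eq:brieskorn}. Parts (i) and (ii) are then immediate specializations of the criterion, exactly as you argue.
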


\subsection{Isotropy and centralizers}
\label{subsec:iso-Aomoto}

Following \cite[p. 10]{AFRW}, we define the \emph{centralizer} of an element $a\in V^{\vee}$, by setting
\[
\Cent_{\A}(a)=:\bigl\{b\in V^{\vee}: a\wedge b\in I^2(\A)\bigr\}.
\] 

Using this notion, a subspace $P\subseteq V^{\vee}$ is isotropic if and only if $P\subseteq \Cent_{\A}(p)$ for
every $p\in P$.
Moreover, $a\in \RR(\A)$ precisely when $\dim \Cent_{\A}(a)\ge 2$, and therefore any isotropic subspace of dimension at least two must be contained in $\RR(\A)$. This motivates the following.

\begin{definition}
\label{def:cent}
The \emph{centralizer} of a subspace $P\subseteq V^{\vee}$ is defined as 
\[
\Cent_{\A}(P)\coloneqq \bigcap_{p\in P} \Cent_{\A}(p)
=\bigl\{g\in V^{\vee}: p\wedge g\in I^2(\A)\ \text{for all}\ p\in P\bigr\}.
\]
\end{definition}

We have the following elementary lemma concerning maximal isotropic subspaces of $V^{\vee}$:

\begin{lemma}
\label{lem:rigidity}
Let $P\subseteq V^{\vee}$ be an isotropic subspace with $\dim P\ge 2$. Then $P$ is maximal among the isotropic subspaces of $V^{\vee}$ if and only if $\Cent_{\A}(P)=P$. 
\end{lemma}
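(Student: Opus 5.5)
Both directions follow by comparing $P$ with isotropic subspaces of the form $P+\langle g\rangle$. We first note that for isotropic $P$ one always has $P\subseteq \Cent_{\A}(P)$, since $p\wedge p'\in \bw^2 P\subseteq I^2(\A)$ for all $p,p'\in P$.

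\emph{($\Leftarrow$)} Assume $\Cent_{\A}(P)=P$, and let $Q\supseteq P$ be an isotropic subspace. For $q\in Q$ and $p\in P\subseteq Q$ we have $p\wedge q\in\bw^2 Q\subseteq I^2(\A)$. Hence $q\in\Cent_{\A}(P)=P$, so $Q=P$. Thus $P$ is maximal. This direction does not use $\dim P\ge 2$.

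\emph{($\Rightarrow$)} Assume $P$ is maximal isotropic and let $g\in\Cent_{\A}(P)$. We will show that $Q:=P+\C g$ is isotropic. Note that $\bw^2 Q=\bw^2 P+P\wedge g$. The first summand lies in $I^2(\A)$ because $P$ is isotropic. The second lies in $I^2(\A)$ because $p\wedge g\in I^2(\A)$ for every $p\in P$, by the definition of the centralizer. Since $I^2(\A)$ is a linear subspace, $\bw^2 Q\subseteq I^2(\A)$, so $Q$ is isotropic. Maximality of $P$ then forces $Q=P$, that is, $g\in P$. Together with the first remark this gives $\Cent_{\A}(P)=P$.

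The argument is formal, and the only point needing care is the precise meaning of ``maximal''. If maximality is meant among all isotropic subspaces, the above is complete. The hypothesis $\dim P\ge 2$ plays no role in the proof; it only excludes degenerate situations. For example, every line is isotropic, since $\bw^2$ of a line is $0$, but a line is typically not its own centralizer. If instead maximality is intended among isotropic subspaces of dimension at least two, nothing changes: the enlargement $Q=P+\C g$ has $\dim Q\ge\dim P\ge 2$, so the same argument applies.
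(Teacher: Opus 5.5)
Your proof is correct and follows essentially the same route as the paper: in both directions you compare $P$ with $P+\C g$ for $g\in\Cent_{\A}(P)$, using that $\bw^2(P+\C g)$ is spanned by $\bw^2P$ and $P\wedge g$. One small aside: your remark that a line is ``typically not its own centralizer'' is backwards, since $\Cent_{\A}(a)=\C a$ whenever $a\notin\RR(\A)$; this does not affect the argument, and you are right that $\dim P\ge 2$ is never used.
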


\begin{proof}
Assume first that $P$ is maximal among the isotropic subspaces of $V^{\vee}$.
Since $P$ is isotropic, we have $P\subseteq \Cent_{\A}(P)$. Suppose there exists $g\in \Cent_{\A}(P)\setminus P$. If we consider $P':=P+\mathbb C\cdot g$, then for all $p,p'\in P$ and all $\lambda,\mu\in \C$, we obtain
\[
(p+\lambda g)\wedge (p'+\mu g)=p\wedge p'+\mu\, p\wedge g-\lambda\, p'\wedge g
\in I^2(\A).
\]
This implies that $P'$ is isotropic, contradicting the maximality of $P$.

Conversely, assume $\Cent_{\A}(P)=P$ and let $P'\supseteq P$ be isotropic. For every $g\in P'$ and every $p\in P\subseteq P'$ we have $p\wedge g\in\bw^2P'\subseteq I^2(\A)$, so $g\in\Cent_{\A}(P)=P$, and hence $P'=P$ and $P$ is maximal.
\end{proof}


\subsection{Multinets and resonance components}
\label{sec:multinets}

Following Falk and Yuzvinsky \cite{FY}, we recall next the notion of multinet and record some important properties.

\begin{definition}
\label{def:multinet}
Let $\BB\subseteq \A$ be a subarrangement. A \emph{$k$-multinet}
$\NN=(\BB_1,\dots,\BB_k;m;\XX)$ on $\BB$, with $k\ge 3$, consists of a partition
$\BB=\BB_1\sqcup \cdots \sqcup \BB_k$, an assignment $m\colon \BB\rightarrow \mathbb Z_{>0}$, and a
subset $\XX\subseteq L_2(\BB)$, called the \emph{base locus} of the multinet, such that:
\begin{enumerate}[itemsep=2pt]
\item \label{m1} There is an integer $d>0$ with $\sum_{H\in \BB_i}m_H=d$, for
$i=1,\dots,k$.
\item \label{m2} If $H\in \BB_i$ and $L\in \BB_j$ with $i\ne j$, then $H\cap L\in \XX$.
\item \label{m3} For each $X\in \XX$, the integer
$n_X\coloneqq \sum_{H\in \BB_i,\, X\subseteq H} m_H$ does not depend on $i$.
\item \label{m4} For $i=1, \ldots, k$, the graph $\Gamma_i$ having vertex set $\BB_i$ and an edge
$\{H,L\}$ whenever $H\cap L\notin \XX$ is connected.
\end{enumerate}

\vskip 5pt

The integer $d$ is the \emph{weighted degree} of $\NN$. Set
$u_i\coloneqq \sum_{H\in \BB_i}m_H\cdot e_H\in V^{\vee}$ and
\begin{equation}
\label{eq:PN}
P_{\NN}\coloneqq \Bigl\{\sum_{i=1}^k \lambda_i u_i : \sum_{i=1}^k \lambda_i=0\Bigr\}
=\spn\{u_2-u_1,\dots,u_k-u_1\}\subseteq V^{\vee} .
\end{equation}
The vectors $u_i$ are linearly independent, and therefore $\dim P_{\NN}=k-1\ge 2$.
\end{definition}

We record some important consequences of Definition~\ref{def:multinet}. By \cite[Lemma~2.3(iii)]{FY}, we have
\begin{equation}\label{eq:deg1}
\sum_{X\in \XX,\, X\subseteq H} n_X=d.
\end{equation}
As noted in \cite[p.~1073]{FY}, the base locus $\mc{X}$ is determined by the partition $\BB=\BB_1\sqcup \cdots \sqcup \BB_k$:
\begin{equation}
\label{eq:X-from-B_i}
    \mc{X}=\bigl\{H\cap L : H\in\BB_i,\ L\in\BB_j,\ i\neq j\bigr\}.
\end{equation}
In particular $n_X>0$ for $X\in \mc{X}$, hence each $\BB_i$ contains some hyperplane passing through~$X$. Finally, if $r_H\coloneqq \abs{\{X\in \XX: X\subseteq H\}}$ then in the proof of \cite[Lemma~3.10]{FY} it is shown that
\begin{equation}\label{eq:deg2}
\sum_{\substack{L\in \BB_i\setminus\{H\}\\ H\cap L\notin \XX}} m_L=(r_H-1)\, m_H .
\end{equation}

The simplest examples of multinets recover the subspaces of the form $P_X$ in \eqref{eq:defPX} which have dimension at least $2$. For a flat $X\in L_2(\A)$ with $|\A_X|=k\geq 3$, let $\mc{B}$ contain each of the hyperplanes in $\A_X$ and set all multiplicities equal to $1$. We obtain a $k$-multinet $\mc{N}$ of weighted degree $d=1$ and base locus $\XX=\{X\}$, and $P_{\mc{N}}=P_X$.

\vskip 3pt

We summarize next some important consequences of \cites{DPS-duke,FY,LY00} in the form they will be needed in the rest of this paper.

\begin{theorem}\label{thm:multinet-spaces} Notation as above.

\noindent (1) Each space $P_{\NN}$ in \eqref{eq:PN} is an isotropic subspace of $V^\vee$ contained in $\mc{R}(\mc{A})$.

\noindent (2) Every positive-dimensional irreducible component $P$ of $\RR(\A)$ is a linear subspace, and $P=P_{\NN}$ for some multinet $\NN$ on a subarrangement. 
Moreover, every positive-dimensional irreducible component of $\RR(\A)$ is maximal among the isotropic subspaces of $V^{\vee}$.
\end{theorem}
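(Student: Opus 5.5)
This statement collects results of \cite{DPS-duke}, \cite{FY} and \cite{LY00}; the plan is to recall which ingredient gives which assertion and to check that they combine correctly.

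For (1), isotropy of $P_{\NN}$ can be checked directly via the local criterion \eqref{eq:local-to-global} and Lemma~\ref{lem:local-wedge}. Take $a=\sum\lambda_i u_i$ and $b=\sum\mu_i u_i$ with $\sum\lambda_i=\sum\mu_i=0$, and a flat $X\in L_2(\A)$.
\begin{itemize}
\item If $X\notin\XX$, then by \eqref{m2} all hyperplanes of $\BB$ through $X$ lie in a single block $\BB_j$. Hence $a_X=\lambda_j (u_j)_X$ and $b_X=\mu_j (u_j)_X$ are proportional, so $a_X\wedge b_X=0$.
\item If $X\in\XX$, axiom \eqref{m3} gives $\partial_X a=\sum_i\lambda_i n_X=0$, and likewise $\partial_X b=0$. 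Lemma~\ref{lem:local-wedge}\ref{lw2} (or directly \eqref{eq:wedge-criterion}) then yields $a_X\wedge b_X\in I^2(\A_X)$.
\end{itemize}
Thus $\bw^2P_{\NN}\subseteq I^2(\A)$. Since $\dim P_{\NN}=k-1\ge 2$ and the $u_i$ are independent, every nonzero $a\in P_{\NN}$ has $\dim\Cent_\A(a)\ge 2$, so $P_{\NN}\subseteq\RR(\A)$.

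For (2), linearity of the components comes from \cite{DPS-duke}: the resonance of a $1$-formal space is a union of rationally defined linear subspaces, each isotropic, namely the tangent cone theorem applied to the characteristic variety. The identification of each component with some $P_{\NN}$ is the main theorem of \cite{FY}. It states that a component of dimension $k-1$ supported on a subarrangement $\BB$ arises from a $k$-multinet on $\BB$, with the $u_i$ read off from the component.

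Maximality among isotropic subspaces is the delicate point, and I expect it to be the main obstacle. An isotropic subspace $P'$ of dimension $\ge 2$ lies in $\RR(\A)$, and so does its closure. Being irreducible, $P'$ lies in a single component. If $P'$ contains a component $P$, then $P'=P$. Hence components are maximal isotropic. The subtlety is whether every isotropic subspace containing a component lies in $\RR(\A)$. This holds because isotropic subspaces of dimension $\ge 2$ consist of resonant vectors, as noted before Definition~\ref{def:cent}. Finally, the Libgober--Yuzvinsky results \cite{LY00}, that components intersect trivially and are isotropic, are consistent with this; they show that an isotropic $P'\supseteq P$ is itself contained in a component equal to $P$.
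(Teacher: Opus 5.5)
Your proposal is correct, and it is more self-contained than the paper. The paper gives no argument here and simply records the statement as a consequence of \cite{DPS-duke}, \cite{FY} and \cite{LY00}. You keep citations only for the two deep inputs: linearity of the components, and the Falk--Yuzvinsky identification $P=P_{\NN}$. You prove the rest directly.

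Your check of (1) is the same local computation the paper carries out later, in the reverse implication of Proposition~\ref{prop:cent-local}. Every $b\in P_{\NN}$ satisfies \ref{C1} by Definition~\ref{def:multinet}\eqref{m3}, and satisfies \ref{C2} because a flat outside $\XX$ meets at most one block. So (1) is really a special case of that proposition, whose proof does not use the present theorem.

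Your maximality argument is also sound, and it shows that this part of (2) is formal. An isotropic $P'\supseteq P$ of dimension $\ge 2$ lies in $\RR(\A)$, hence in some irreducible component $P''$. Then $P\subseteq P''$ forces $P=P''=P'$. The isotropy of $P$ itself is supplied by (1) once $P=P_{\NN}$. This makes the input to Lemma~\ref{lem:rigidity} and Corollary~\ref{cor:separable-maximal} transparent, at the cost of a little length, whereas the paper's citation-only treatment is shorter but leaves you to locate maximality in the literature.

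Two minor corrections:
\begin{itemize}
\item The tangent cone theorem gives linearity of the components for $1$-formal spaces, but isotropy is not a general $1$-formal phenomenon. It needs the quasi-K\"ahler results of \cite{DPS-duke} or the arrangement-specific \cite{LY00}. You do not need it anyway, since it follows from your (1).
\item The final appeal to \cite{LY00} is superfluous. Trivial pairwise intersection of components plays no role, and the conclusion you attribute to it is exactly what your own argument already proves.
\end{itemize}
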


The components of type $P_X$ discussed above are called \emph{local}, and the components supported on the whole arrangement (that is, corresponding to multinets on $\mc{A}$) are called \emph{essential}.

\section{A generalization of separability for multinet subspaces}

Throughout this section $\NN=(\BB_1,\dots,\BB_k;m;\XX)$ is a $k$--multinet on a subarrangement $\BB\subseteq\A$, and $P=P_{\NN}\subseteq V^{\vee}$ is the associated subspace \eqref{eq:PN} of the resonance $\RR(\A)$. In order to prove Theorem \ref{thm:separable}, we need to verify the separability of resonance components. We do so by establishing a more general statement without assuming that $P=P_{\NN}$ is a component.

\begin{theorem}
\label{thm:factorization}
Let $\NN$ be a multinet on a subarrangement $\BB\subseteq \A$ and set $P=P_{\NN}$. Then
\begin{equation}
\label{eq:factorization}
\bigl(P\wedge V^{\vee}\bigr)\cap I^2(\A)=P\wedge \Cent_{\A}(P).
\end{equation}
\end{theorem}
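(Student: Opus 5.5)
The inclusion $\supseteq$ in \eqref{eq:factorization} is immediate: if $g\in\Cent_{\A}(P)$ and $p\in P$, then $p\wedge g\in I^2(\A)$ by Definition~\ref{def:cent}, and it obviously lies in $P\wedge V^{\vee}$. For the reverse inclusion, I would write an arbitrary element of $P\wedge V^{\vee}$ symmetrically as
\[
\omega=\sum_{i=1}^k u_i\wedge h_i,\qquad \sum_{i=1}^k h_i=0,
\]
with $h_i=g_i$ for $i\ge 2$ and $h_1=-\sum_{i\ge2}g_i$ when $\omega=\sum_{i\ge 2}(u_i-u_1)\wedge g_i$. The plan is to convert $\omega\in I^2(\A)$ into local conditions via \eqref{eq:local-to-global}, that is, $\partial_X(\omega_X)=0$ for every $X\in L_2(\A)$. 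I would compare these with an explicit local description of $\Cent_{\A}(P)$, and then use the freedom in the choice of the $h_i$ to move each of them into $\Cent_{\A}(P)$.

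First, I compute $\Cent_{\A}(P)$ using Lemma~\ref{lem:local-wedge}. If $X\in\XX$, then $\partial_X u_i=n_X$ for all $i$, so $\partial_X p=0$ for all $p\in P$, and $p_X\neq 0$ for generic $p$. By \ref{lw2} the condition is therefore $\partial_X g=0$. If $X\notin\XX$ meets $\BB$, then by axiom \eqref{m2} all hyperplanes of $\BB$ through $X$ lie in a single block $\BB_j$, and $p_X=\lambda_j(u_j)_X$. By \ref{lw1} the condition is that $g_X$ is proportional to $(u_j)_X$. Flats disjoint from $\BB$ impose nothing. Following the propagation idea in Proposition~\ref{prop:propagation}, the ratios $g_H/m_H$ agree along the edges of $\Gamma_j$, so connectivity \eqref{m4} gives
\[
\Cent_{\A}(P)=\bigl\{g:\ g|_{\BB_j}\in\C u_j\ \text{for all } j,\ \ \partial_X g=0\ \text{for all } X\in\XX\bigr\},
\]
with some care for the flats $X\notin\XX$ that contain hyperplanes outside $\BB$.

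Second, I apply the Leibniz rule to $\omega$ at each flat. At $X\in\XX$ we have $\partial_X\omega_X=-\sum_i(\partial_X h_i)(u_i)_X$. The vectors $(u_i)_X$ are nonzero with disjoint supports, so $\partial_X h_i$ is independent of $i$, and since $\sum_i h_i=0$ it vanishes. At $X\notin\XX$ lying in block $\BB_j$, only the term $u_j\wedge h_j$ contributes, and the condition forces $(h_j)_X\propto(u_j)_X$. Propagating over $\Gamma_j$ as above gives $h_j|_{\BB_j}\in\C u_j$.

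The main obstacle is that the restriction $h_i|_{\BB_l}$ for $l\neq i$ is not controlled by these local conditions. I would handle this by exploiting the non-uniqueness of the representation. Splitting $h_i=h_i^{\mathrm{out}}+\sum_l h_i^{(l)}$ with $h_i^{(l)}$ supported on $\BB_l$, I would find that the relevant pieces of $\omega$ are $u_i\wedge h_i^{(l)}+u_l\wedge h_l^{(i)}$. I would then try to show, using the vanishing $\partial_X h_i=0$ on $\XX$ together with \eqref{eq:deg1} and \eqref{eq:deg2}, that $h_i^{(l)}$ can be traded for a multiple of $u_l$ plus a correction absorbed into $h_l$, via the identity $u_i\wedge x=-x\wedge u_i$. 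After this redistribution each new $h_i$ should satisfy the description of $\Cent_{\A}(P)$, while $\omega$ and the relation $\sum h_i=0$ (up to adding elements of $\C u_i$) are preserved. That would yield $\omega\in P\wedge\Cent_{\A}(P)$. Verifying that this redistribution is consistent across all blocks simultaneously is the step I expect to require the most work.
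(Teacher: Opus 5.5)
Your first two steps are sound and agree with the paper. These are the normalization $\omega=\sum_i u_i\wedge h_i$ with $\sum_i h_i=0$, the local description of $\Cent_{\A}(P)$ (conditions (C1)--(C2) of Proposition~\ref{prop:cent-local}, modulo the caveat in (b) below), and the facts $\partial_X h_i=0$ for $X\in\XX$ and $(h_i)_Y\propto(u_i)_Y$ for $Y\in L_2(\A)\setminus\XX$ meeting $\BB_i$. At base points, disjointness of the supports gives $\partial_X h_i=0$ directly; ``independent of $i$'' is not the right intermediate statement.

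The gap is the final step, and the proposed redistribution cannot close it. Since $u_1,\dots,u_k$ are linearly independent, Cartan's lemma shows that two representations $\sum_i u_i\wedge h_i=\sum_i u_i\wedge h'_i$ satisfy $h_i-h'_i=\sum_j a_{ij}u_j$ with $(a_{ij})$ symmetric; if both families sum to zero, then $h_i-h'_i\in P$. Hence $h_i$ off $\BB$, and $h_i|_{\BB_l}$ modulo $\C\,m_l$, are determined by $\omega$ alone. So whether $h_i|_{\BB_l}\in\C u_l$ holds is intrinsic and cannot be arranged by trading terms. It must be \emph{deduced} from $\omega\in I^2(\A)$, and your claim that $h_i|_{\BB_l}$, $l\neq i$, is not controlled by the available conditions is exactly what is false.

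What is missing is a propagation from block $i$ to all blocks. Fix $i$ and put $g=h_i$. Let $g_j$ be its restriction to $\BB_j$, let $\mc{O}$ be the set of hyperplanes outside $\BB$ where $g\neq 0$, and let $\bl{M}$ be the $\XX\times\mc{O}$ incidence matrix.

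(a) Apply the global derivation $\partial$ to $\omega$. Using $\partial u_j=d$ and $\sum_j h_j=0$, you get $\sum_j(\partial h_j)u_j=0$, hence $\partial g=0$; you never derive this.

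(b) Every $H\in\mc{O}$ satisfies $H\cap L\in\XX$ for all $L\in\BB$. For $L\in\BB_i$ this follows from your block-$i$ proportionality. For other blocks, use $\sum_{X\in\XX,\,X\subset H}n_X=\sum_{L\in\BB_j,\,L\cap H\in\XX}m_L\le d$. The left side is independent of $j$ and equality holds for $j=i$, hence for all $j$; positivity of the $m_L$ then gives the claim. This is also the ``care'' your description of $\Cent_{\A}(P)$ needs.

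(c) Write the equations $\partial_X g=0$, $X\in\XX$, as $\sum_j\bl{J}_jg_j+\bl{M}g_{\mc{O}}=0$ and multiply by $\bl{J}_l^T$. Distinct blocks meet only in base points, and by (b) so do hyperplanes of $\mc{O}$ with $\BB$; hence $\bl{J}_l^T\bl{J}_j$ ($j\neq l$) and $\bl{J}_l^T\bl{M}$ are all-ones matrices. Together with (a) this gives $(\bl{J}_l^T\bl{J}_l-\bl{E}_l)g_l=0$. By \eqref{eq:deg2}, this matrix is $\bl{D}_l^{-1}\bl{L}_l\bl{D}_l^{-1}$, where $\bl{L}_l$ is the weighted Laplacian of $\Gamma_l$. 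Its kernel is spanned by the all-ones vector because $\Gamma_l$ is connected, so $g_l\in\C\,m_l$ for \emph{every} $l$.

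Combined with (b), which excludes hyperplanes of $\mc{O}$ through non-base flats meeting $\BB$, this yields (C2) for every block. Thus each $h_i$ lies in $\Cent_{\A}(P)$ for any representation, with no redistribution needed. Your edge-by-edge propagation along $\Gamma_j$ cannot substitute for this. A non-base flat meeting $\BB_l$ only sees $u_l\wedge h_l$, so the information about $h_i$ on the other blocks lives in the base-point equations, which couple all blocks at once.
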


The first step in the proof is to give a  characterization in \S\ref{sec:cent} for the containment $g\in \Cent_{\A}(P)$ via conditions imposed by rank-two flats on the coefficients of $g$ in each of the blocks $\mc{B}_1, \ldots, \mc{B}_k$. The main technical work takes place in \S\ref{sec:propagation}, where we show how such~conditions, imposed at only one block and combined with $\partial g=0$, propagate to all blocks. Once this is established, the proof of Theorem~\ref{thm:factorization} follows by a direct verification of the required conditions. The proof is given in \S\ref{sec:factorization}, where we also explain how Theorems~\ref{thm:main} and \ref{thm:separable} follow as consequences.

\subsection{A  characterization of the centralizer of a multinet subspace}
\label{sec:cent}

\begin{proposition}
\label{prop:cent-local}
Let $P=P_{\NN}$ be as above and $g\in V^{\vee}$. Then $g\in \Cent_{\A}(P)$ if and only~if:
\begin{enumerate}[label=\textup{(C\arabic*)},itemsep=2pt]
\item \label{C1} $\partial_X g=0$, for every $X\in \XX$, and
\item \label{C2} for every block $\BB_i$ and every $Y\in L_2(\A)\setminus \XX$ meeting $\BB_i$, $g_Y$ is proportional to $(u_i)_Y$.
\end{enumerate}
\end{proposition}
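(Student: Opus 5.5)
The plan is to reduce everything to local conditions at rank-two flats via \eqref{eq:local-to-global}, and then apply Lemma~\ref{lem:local-wedge} flat by flat. Since $P$ is spanned by the differences $u_j-u_i$, we have $g\in\Cent_{\A}(P)$ if and only if $(u_j-u_i)_Y\wedge g_Y\in I^2(\A_Y)$ for all $i\neq j$ and all $Y\in L_2(\A)$. The first step is to compute $(u_j-u_i)_Y$ and $\partial_Y(u_j-u_i)$ for the two kinds of flats, namely $Y\in\XX$ and $Y\notin\XX$.

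\emph{Flats in $\XX$.} For $X\in\XX$, axiom \ref{m3} gives $\partial_X u_i=n_X$ for every $i$. Hence $\partial_X(u_j-u_i)=0$. The observation after \eqref{eq:X-from-B_i} shows that each block has a hyperplane through $X$ with positive multiplicity, and the blocks are disjoint, so $(u_j-u_i)_X\neq 0$. Lemma~\ref{lem:local-wedge}\ref{lw2} then says that the condition at $X$ is exactly $\partial_X g=0$, which is \ref{C1}.

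\emph{Flats not in $\XX$.} Take $Y\notin\XX$. By axiom \ref{m2}, all hyperplanes of $\BB$ through $Y$ lie in a single block. If no hyperplane of $\BB$ contains $Y$, then $(u_j-u_i)_Y=0$ and there is no condition; this matches \ref{C2}, which only concerns $Y$ meeting some block. Otherwise the hyperplanes of $\BB$ through $Y$ all lie in one block $\BB_i$. Then $(u_j-u_i)_Y=-(u_i)_Y$ for $j\neq i$, and $(u_l-u_j)_Y=0$ for $l,j\neq i$. Since $\partial_Y(-(u_i)_Y)=-\sum m_H<0$ is nonzero, Lemma~\ref{lem:local-wedge}\ref{lw1} says that the condition is exactly that $g_Y$ is proportional to $(u_i)_Y$. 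This uses $k\ge 3$ only to ensure that some $j\neq i$ exists, which is automatic.

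Combining the two cases over all $Y$ gives both implications at once. The one thing to check carefully is the convention behind ``$Y$ meeting $\BB_i$''. I read it as meaning that some $H\in\BB_i$ contains $Y$, and under this reading the case analysis above is exhaustive. I do not expect any real obstacle: the only subtle point is the appeal to \eqref{eq:X-from-B_i} and axiom \ref{m3} to guarantee $(u_j-u_i)_X\ne 0$, which is what makes \ref{lw2} applicable.
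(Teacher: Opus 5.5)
Your proof is correct and follows essentially the same route as the paper: reduce via \eqref{eq:local-to-global} to single rank-two flats, then apply Lemma~\ref{lem:local-wedge}\ref{lw2} at flats in $\XX$ and Lemma~\ref{lem:local-wedge}\ref{lw1} at flats outside $\XX$ that meet a block. The only cosmetic difference is that you test against the spanning set $\{u_j-u_i\}$ to get both implications at once, whereas the paper uses these differences for the forward direction and a general $p=\sum\lambda_i u_i$ together with \eqref{eq:wedge-criterion} for the converse.
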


\begin{proof} We write a general element $p\in P$ as $p=\sum_i \lambda_i u_i\in P$, with $\sum_i \lambda_i=0$. In particular,
\begin{equation}\label{eq:parXp=0}
 \partial_X(p) = \sum_i \lambda_i \partial_X(u_i)=n_X \sum_i \lambda_i = 0\text{ for every }X\in \XX.
\end{equation}
Suppose first that $g\in \Cent_{\A}(P)$.  If $X\in\XX$ then using \eqref{eq:X-from-B_i} we can find block indices $i\neq j$ such that $X$ meets $\BB_i$ and $\BB_j$, so $(u_i)_X\neq 0\neq (u_j)_X$. Taking $p=u_i-u_j$ we have $p_X\neq 0$ and $\partial_X(p)=0$, while \eqref{eq:local-to-global} implies that $p_X\wedge g_X\in I^2(\A_X)$. Applying Lemma~\ref{lem:local-wedge}\ref{lw2}, we conclude that $g$ satisfies \ref{C1}. If $Y\in L_2(\A)\setminus \XX$ meets $\BB_i$ then $(u_i)_Y\neq 0$ and using Definition~\ref{def:multinet}\eqref{m2}, $Y$ meets no other block. Taking $p=u_i-u_j$ for some $j\neq i$, we get $p_Y=(u_i)_Y\neq 0$, and $\partial_Y (p)=\partial_Y (u_i)=\sum_{H\in \BB_i,\, Y\subseteq H} m_H>0$. Applying \eqref{eq:local-to-global} again implies $p_Y\wedge g_Y\in I^2(\A_Y)$, and therefore Lemma~\ref{lem:local-wedge}\ref{lw1} shows that $g$ satisfies \ref{C2}.

\vskip 4pt

For the reverse implication, if we assume $g$ satisfies \ref{C1} and \ref{C2}, it suffices by \eqref{eq:local-to-global} to check that $p_Y\wedge g_Y\in I^2(\A_Y)$ for all $p\in P$ and $Y\in L_2(\A)$. If $X\in\XX$ then \eqref{eq:parXp=0}, \ref{C1}, and \eqref{eq:wedge-criterion} imply $p_X\wedge g_X\in I^2(\A_X)$. If $Y\in L_2(\A)\setminus \XX$ meets some block $\BB_i$ then this block is unique, hence either $p_Y=\ll_i(u_i)_Y$ or $g_Y$ is proportional to $p_Y$ by \ref{C2}, proving $p_Y\wedge g_Y=0$. If $Y$ meets no $\BB_i$ then $p_Y=0$ and we have again $p_Y\wedge g_Y=0$, concluding the proof.
\end{proof}

Condition \ref{C2} for a fixed block $\BB_i$ of the multinet $\mathcal{N}$ has the following important consequence regarding incidence conditions for hyperplanes outside $\BB$.

\begin{lemma}
\label{lem:outside}
Fix a block index $i$ and assume that $g\in V^{\vee}$ is such that $g_Y$ is proportional to
$(u_i)_Y$ for every $Y\in L_2(\A)\setminus \XX$ meeting $\BB_i$. If $H\in \A\setminus
\BB$ satisfies $g_H\ne 0$, then
\begin{equation}
\label{eq:allbase}
H\cap L\in \XX \qquad \text{for every } L\in \BB.
\end{equation}
\end{lemma}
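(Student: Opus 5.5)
We argue by contradiction, combining the hypothesis on the block $\BB_i$ with the balancing axiom \eqref{m3} of the multinet. The plan has two steps. First we show directly that every hyperplane of the fixed block $\BB_i$ meets $H$ in a base-locus flat. Then we transfer this to every other block $\BB_j$ by a weighted count of the flats of $\XX$ contained in $H$.

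\emph{Step 1 (the block $\BB_i$).} Fix $H\in\A\setminus\BB$ with $g_H\neq 0$, and let $L'\in\BB_i$. Since $H\neq L'$, the intersection $Y\coloneqq H\cap L'$ is a rank-two flat. Suppose $Y\notin\XX$. Then $Y$ is a flat of $L_2(\A)\setminus\XX$ meeting $\BB_i$, so by hypothesis $g_Y$ is proportional to $(u_i)_Y$. The vector $(u_i)_Y$ is supported on hyperplanes of $\BB_i$, and $H\notin\BB$, so its $e_H$-coefficient vanishes. Since $H\in\A_Y$, the $e_H$-coefficient of $g_Y$ is $g_H$, which would then be $0$, a contradiction. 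Hence $H\cap L'\in\XX$ for every $L'\in\BB_i$.

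\emph{Step 2 (propagation to all blocks by counting).} For any block index $j$, consider
\[
\Sigma_j\coloneqq\sum_{\substack{X\in\XX\\ X\subseteq H}}\ \sum_{\substack{L\in\BB_j\\ X\subseteq L}} m_L .
\]
Each $L\in\BB_j$ meets $H$ in exactly one rank-two flat, namely $H\cap L$. Hence $L$ is counted at most once in $\Sigma_j$, and it is counted exactly when $H\cap L\in\XX$. This gives
\[
\Sigma_j=\sum_{\substack{L\in\BB_j\\ H\cap L\in\XX}} m_L\le d,
\]
with equality if and only if $H\cap L\in\XX$ for all $L\in\BB_j$.

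By Step 1, equality holds for $j=i$, so $\Sigma_i=d$. On the other hand, axiom \eqref{m3} says that the inner sum equals $n_X$ for every block. Therefore $\Sigma_j=\sum_{X\in\XX,\,X\subseteq H}n_X=\Sigma_i=d$ for all $j$. Equality then forces $H\cap L\in\XX$ for every $L\in\BB_j$ and every $j$, which is \eqref{eq:allbase}.

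The only delicate point is Step 2: the hypothesis controls only the block $\BB_i$. The obstacle is resolved by the observation that $H$ meets each hyperplane of $\BB$ in a single rank-two flat, so the weighted counts $\Sigma_j$ are bounded by $d$ and are independent of $j$ by \eqref{m3}.
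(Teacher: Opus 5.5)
Your proof is correct and follows the paper's argument essentially step for step. Step 1 is the same contradiction via $(u_i)_H=0$ for $H\notin\BB$. Step 2 is the same weighted count $\sum_{X\in\XX,\,X\subseteq H}n_X=\sum_{L\in\BB_j,\,L\cap H\in\XX}m_L\le d$, where equality at $j=i$ forces equality for every $j$. Your explicit double sum $\Sigma_j$ only makes visible the use of axiom~\eqref{m3}, which the paper leaves implicit when it says both sides are independent of $j$.
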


\begin{proof} To prove \eqref{eq:allbase} for $L\in\BB_i$, we suppose by contradiction that $Y=H\cap L\not\in\XX$. It follows that $g_Y=c\,(u_i)_Y$ for some scalar $c$, and since $g_H\neq 0$ we get $(u_i)_H\neq 0$. This forces $H\in\BB_i$, contradicting the assumption $H\in\A\setminus\BB$.

Consider now a general block index $j$. A hyperplane $L\in\BB_j$ contains at most one flat $X\in\XX$ with $X\subset H$ (otherwise we would have $L=H$, but $H\not\in\BB$) and therefore
\begin{equation}\label{eq:double-sum-XinXXandH}
\sum_{X\in \XX,\, X\subset H} n_X
=\sum_{\substack{L\in \BB_j\\ L\cap H\in \XX}} m_L
\le \sum_{L\in \BB_j} m_L=d .
\end{equation}
Moreover, equality holds in \eqref{eq:double-sum-XinXXandH} for $j=i$ because the condition $L\cap H\in \XX$ holds for $L\in\BB_i$. Since the two sides of \eqref{eq:double-sum-XinXXandH} are independent of $j$, equality must in fact hold for all $j$. Since $m_L>0$ for $L\in\BB_j$, we conclude that $L\cap H\in\XX$ for all $L\in\BB_j$, as desired.
\end{proof}

\subsection{The weighted Laplacian of a block}
\label{sec:propagation}

Following ideas which already appeared in \cites{FY,LY00}, we define for each block $\BB_j$ of $\mathcal{N}$ the $\abs{\XX}\times \abs{\BB_j}$ incidence matrix $\bl{J}_j$ where $(\bl{J}_j)_{X,H}=1$ if $X\subseteq H$ and $(\bl{J}_j)_{X,H}=0$ otherwise. We let $\bl{E}_j$ be the all-ones square
matrix of size $\abs{\BB_j}$, and set
\begin{equation}
\label{eq:Qj}
\bl{Q}_j\coloneqq \bl{J}_j^{T}\bl{J}_j-\bl{E}_j, \qquad
\bl{D}_j\coloneqq \operatorname{diag}(m_H)_{H\in \BB_j}, \qquad
m_j\coloneqq (m_H)_{H\in \BB_j}\in \mathbb C^{|\mathcal{B}_j|} .
\end{equation}
With the notation in Section~\ref{sec:multinets}, we have \[(\bl{Q}_j)_{H,H}=r_H-1, \text{ and for }L\neq H\in\BB_j,\ (\bl{Q}_j)_{L,H} = \begin{cases}
  0 & \text{if }L\cap H\in\XX, \\
  -1 & \text{if }L\cap H\not\in\XX.
\end{cases}
\]
The matrix $\bl{Q}_j$ is closely related to a weighted Laplacian associated to the graph $\Gamma_j$ from Definition~\ref{def:multinet}\eqref{m4}, as follows. Recall that $\Gamma_j$ has vertex set $\BB_j$, and an edge $\{H,L\}$ whenever $H\cap L\notin \XX$, and we assign each edge $\{H,L\}$ weight $m_Hm_L>0$. 

\vskip 3pt

The \emph{weighted Laplacian} $\bl{L}_j$ is then defined by
\[(\bl{L}_j)_{H,H} = \sum_{L\cap H\not\in\XX} m_L\cdot m_H \overset{\eqref{eq:deg2}}{=}(r_H-1)m_H^2,\text{ and  } (\bl{L}_j)_{L,H} = \begin{cases}
  0 & \text{if }L\cap H\in\XX, \\
  -m_Hm_L & \text{if }L\cap H\not\in\XX.
\end{cases} \]
Since $\Gamma_j$ is connected, the kernel is one-dimensional, spanned by the all-ones vector $\bl{1}_j\in\bb{C}^{\BB_j}$, see also  \cite{chung}*{Chapter~1}. Using \eqref{eq:Qj}, it follows that we can write $\bl{L}_j=\bl{D}_j\bl{Q}_j\bl{D}_j$, and therefore
\begin{equation}
\label{eq:kerQ}
\ker \bl{Q}_j=\C\,(\bl{D}_j\bl{1}_j)=\C\,\cdot m_j .
\end{equation}

\begin{lemma}
\label{lem:blockwise}
Let $g\in V^{\vee}$ such that $\partial g=0$ and $\partial_X g=0$ for all $X\in \XX$. We
assume that every $H\in \A\setminus \BB$ with $g_H\ne 0$ satisfies \eqref{eq:allbase}.
Then for each $j$ there is a scalar $c_j$ with
\begin{equation}
\label{eq:proportional}
g_H=c_j\, m_H, \qquad \text{ for all }H\in \BB_j.
\end{equation}
\end{lemma}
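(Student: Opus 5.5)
The plan is to show that, for each fixed block $\BB_j$, the restricted coefficient vector $\gamma_j\coloneqq (g_H)_{H\in\BB_j}\in\C^{\BB_j}$ lies in $\ker\bl{Q}_j$. The conclusion \eqref{eq:proportional} then follows at once from \eqref{eq:kerQ}, since $\ker\bl{Q}_j=\C\cdot m_j$. By \eqref{eq:Qj} and the description of the entries of $\bl{Q}_j$, the $H$-th entry of $\bl{Q}_j\gamma_j$ is
\[
(\bl{Q}_j\gamma_j)_H=(r_H-1)\,g_H-\sum_{\substack{L\in\BB_j\setminus\{H\}\\ H\cap L\notin\XX}} g_L .
\]
So it suffices to prove that this expression vanishes for every $H\in\BB_j$.

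Fix $H\in\BB_j$ and sum the hypotheses $\partial_X g=0$ over all $X\in\XX$ with $X\subseteq H$. For $L\neq H$, a flat $X\in L_2(\A)$ is contained in both $H$ and $L$ exactly when $X=H\cap L$. Hence $L$ contributes $g_L$ to this sum precisely when $H\cap L\in\XX$, while $H$ itself contributes $r_H\,g_H$. This gives
\[
0=\sum_{\substack{X\in\XX\\ X\subseteq H}}\partial_X g = r_H\, g_H+\sum_{\substack{L\in\A\setminus\{H\}\\ H\cap L\in\XX}} g_L .
\]

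Next we split the sum over $L$ according to where $L$ lies.
\begin{enumerate}[label=\textup{(\alph*)},itemsep=2pt]
\item If $L\in\BB_k$ with $k\neq j$, then $H\cap L\in\XX$ by Definition~\ref{def:multinet}\eqref{m2}, so $L$ always contributes.
\item If $L\in\A\setminus\BB$ and $g_L\neq0$, then $H\cap L\in\XX$ by the hypothesis \eqref{eq:allbase}, so $L$ contributes as well.
\item If $L\in\A\setminus\BB$ and $g_L=0$, then $L$ contributes nothing either way.
\end{enumerate}
Consequently the sum over $L\notin\BB_j$ equals the full sum $\sum_{L\notin\BB_j} g_L$. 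By $\partial g=0$, this full sum equals $-\sum_{L\in\BB_j} g_L$.

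Substituting, we obtain
\[
0=r_Hg_H-g_H-\sum_{L\in\BB_j\setminus\{H\}}g_L+\sum_{\substack{L\in\BB_j\setminus\{H\}\\ H\cap L\in\XX}}g_L=(r_H-1)g_H-\sum_{\substack{L\in\BB_j\setminus\{H\}\\ H\cap L\notin\XX}}g_L,
\]
which is exactly $(\bl{Q}_j\gamma_j)_H=0$. Therefore $\gamma_j\in\ker\bl{Q}_j=\C\cdot m_j$, and this gives $c_j$.

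The only delicate point is the bookkeeping in the case split above. Specifically, one must check that every hyperplane outside $\BB_j$ either meets $H$ in a base-locus flat or has $g_L=0$; this is exactly where \eqref{m2} and \eqref{eq:allbase} are used. Only then can $\partial g=0$ convert the off-block contribution into a sum over $\BB_j$ alone.
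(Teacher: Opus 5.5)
Your proof is correct and is essentially the paper's argument written row by row. Summing $\partial_X g=0$ over $X\in\XX$ with $X\subseteq H$ is the $H$-th entry of the paper's product of the incidence system $\bl{J}_1g_1+\cdots+\bl{J}_kg_k+\bl{M}g_{\mc{O}}=0$ with $\bl{J}_j^{T}$, your case split (a)--(c) is exactly the paper's identities $\bl{J}_j^{T}\bl{J}_h=\bl{E}_{jh}$ for $h\neq j$ and $\bl{J}_j^{T}\bl{M}=\bl{E}_{j\mc{O}}$, and both arguments then use $\partial g=0$ to get $\bl{Q}_jg_j=0$ and conclude from \eqref{eq:kerQ}. (Cosmetic only: in (a) you reuse $k$, which already denotes the number of blocks.)
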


\begin{proof}
Set $\mc{O}:=\{H\in \A\setminus \BB: g_H\ne 0\}$, then let $g_j\in \C^{\BB_j}$ and $g_{\mc{O}}\in \C^{\mc{O}}$ be the corresponding coordinate columns of $g$, and let $\bl{M}$ be the $\abs{\XX}\times \abs{\mc{O}}$ incidence matrix defined by $M_{X,H}=1$ if $X\subseteq H$ and $M_{X,H}=0$ otherwise. The equations $\partial_X g=0$, for $X\in \XX$, assemble into the identity
\begin{equation}
\label{eq:incidence-system}
\bl{J}_1g_1+\cdots+\bl{J}_kg_k+\bl{M}g_{\mc{O}}=0  \in \C^{\XX}.
\end{equation}
Fix a block index $j$. If $h\ne j$, then any $H\in \BB_j$ and $L\in \BB_h$ determine a unique base-point $H\cap L\in\XX$, by Definition~\ref{def:multinet}\eqref{m2}. It follows that $\bl{J}_j^{T}\bl{J}_h=\bl{E}_{jh}$ is the $\abs{\BB_j}\times \abs{\BB_h}$ matrix, having all entries equal to $1$. Likewise,
the condition \eqref{eq:allbase} implies $\bl{J}_j^{T}\bl{M}=\bl{E}_{j\mc{O}}$, this being the $\abs{\BB_j}\times \abs{\mc{O}}$ matrix having all entries equal to $1$. Writing
$s_h=\sum_{L\in \BB_h}g_L$ and $s_{\mc{O}}=\sum_{H\in \mc{O}}g_H$ and multiplying \eqref{eq:incidence-system} by $\bl{J}_j^{T}$, we obtain
\[
\bl{J}_j^{T}\bl{J}_jg_j+\Bigl(\sum_{h\ne j}s_h+s_O\Bigr)\mathbf{1}_j=0 .
\]
The equation $\partial g=0$ implies $\sum_h s_h+s_{\mc{O}}=0$, so $\sum_{h\ne j}s_h+s_O=-s_j$.
Since $\bl{E}_jg_j=s_j\mathbf{1}_j$, it follows that $\bl{Q}_jg_j=\bigl(\bl{J}_j^{T}\bl{J}_j-\bl{E}_j\bigr)g_j=0$, and \eqref{eq:kerQ} implies $g_j\in \C m_j$, which yields \eqref{eq:proportional}.
\end{proof}

We provide a sufficient condition for an element to belong to the centralizer of a resonance component:

\begin{proposition}
\label{prop:propagation}
Fix a block index $i$ and assume that an element $g\in V^{\vee}$ satisfies:
\begin{enumerate}[label=\textup{(P\arabic*)},itemsep=2pt]
\item \label{P1} $\partial g=0$;
\item \label{P2} $\partial_X g=0$, for every $X\in \XX$;
\item \label{P3} $g_Y$ is proportional to $(u_i)_Y$ for every
$Y\in L_2(\A)\setminus \XX$ meeting $\BB_i$.
\end{enumerate}
Then $g\in \Cent_{\A}(P_{\NN})$.
\end{proposition}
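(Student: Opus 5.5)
The plan is to verify the two conditions of Proposition~\ref{prop:cent-local}. Condition \ref{C1} is exactly hypothesis \ref{P2}, so all the work lies in establishing \ref{C2} for \emph{every} block $\BB_j$, not only for the block $\BB_i$ singled out in \ref{P3}. The idea is to first propagate \ref{P3} to global information about the coefficients of $g$, using the two lemmas of the preceding subsections, and only then return to the local condition at each $Y$.

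First, since $g$ satisfies \ref{P3} for the block $\BB_i$, Lemma~\ref{lem:outside} applies. It shows that every $H\in\A\setminus\BB$ with $g_H\neq 0$ satisfies \eqref{eq:allbase}, that is, $H\cap L\in\XX$ for all $L\in\BB$. Together with \ref{P1} and \ref{P2}, these are precisely the hypotheses of Lemma~\ref{lem:blockwise}. That lemma gives scalars $c_1,\dots,c_k$ such that $g_H=c_j m_H$ for all $H\in\BB_j$ and every $j$. In other words, the restriction of $g$ to each block $\BB_j$ equals $c_j u_j$.

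Next, fix a block $\BB_j$ and a flat $Y\in L_2(\A)\setminus\XX$ meeting $\BB_j$, so some $L\in\BB_j$ contains $Y$. I will determine $g_Y$ by sorting the hyperplanes $H\in\A_Y$ into three classes.
\begin{enumerate}[label=\textup{(\alph*)},itemsep=2pt]
\item If $H\in\BB_j$, then its coefficient is $c_j m_H$, which is $c_j$ times the coefficient of $u_j$.
\item If $H\in\BB_h$ with $h\neq j$, then $H\cap L=Y$ would lie in $\XX$ by Definition~\ref{def:multinet}\eqref{m2}, a contradiction. So no such $H$ exists.
\item If $H\in\A\setminus\BB$ and $g_H\neq 0$, then \eqref{eq:allbase} would force $Y=H\cap L\in\XX$, again a contradiction. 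So $g_H=0$ for these $H$.
\end{enumerate}
Hence $g_Y=c_j(u_j)_Y$, which is \ref{C2} for $\BB_j$. Proposition~\ref{prop:cent-local} then yields $g\in\Cent_{\A}(P_{\NN})$.

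The substantive step is the propagation from a single block to all blocks. It is carried by Lemma~\ref{lem:blockwise}, whose kernel computation for $\bl{Q}_j$ relies on the connectivity axiom \eqref{m4}. Given the earlier lemmas, the remaining obstacle is only the bookkeeping in classes (b) and (c). The point to watch is that in (c) the hyperplane $L\in\BB_j$ through $Y$ is used to contradict \eqref{eq:allbase}; the hypothesis \ref{P3} itself is never applied to the block $\BB_j$.
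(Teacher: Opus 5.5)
Your proposal is correct and follows essentially the same route as the paper's proof. The paper likewise notes that \ref{C1} is \ref{P2}, uses Lemma~\ref{lem:outside} and Lemma~\ref{lem:blockwise} to get $g_H=c_jm_H$ on each block, and then for $Y\notin\XX$ meeting $\BB_j$ rules out other blocks via \eqref{m2} and hyperplanes in $\A\setminus\BB$ via \eqref{eq:allbase}, exactly as in your classes (a)--(c).
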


\begin{proof} In order to prove that $g\in \Cent_{\A}(P_{\NN})$, we verify that $g$ satisfies the conditions from Proposition \ref{prop:cent-local}. Note that condition \ref{C1} is the same as \ref{P2}, so it suffices to verify \ref{C2}.

\vskip 4pt

Combining \ref{P3} and Lemma \ref{lem:outside}, it follows that every $H\in \A\setminus \BB$ with $g_H\ne 0$
satisfies \eqref{eq:allbase}. Using \ref{P1}, \ref{P2} and Lemma~\ref{lem:blockwise}, we can find scalars $c_1,\cdots,c_k$ such that \eqref{eq:proportional} holds.

\vskip 4pt

To check \ref{C2}, we consider $Y\in L_2(\A)\setminus\XX$ meeting a block $\BB_j$ and proceed to proving that $g_Y = c_j(u_j)_Y$. Since $g_H=c_j m_H$ for $H\in\BB_j$, it suffices to prove that $g_H=0$ when $Y\subset H$ and $H\not\in\BB_j$. Suppose that there exists such $H$ with $g_H\neq 0$. Since $Y$ meets no block of $\BB$ different from $\BB_j$ (by \eqref{eq:X-from-B_i}), we can further assume that $H\in
\A\setminus\BB$. Since $Y$ meets $\BB_j$, there exists $L\in\BB_j$ such that $Y\subset L$, and \eqref{eq:allbase} implies that $Y=H\cap L\in\XX$, a contradiction.
\end{proof}

\subsection{The separability of the resonance of an arrangement}
\label{sec:factorization}

With all the ingredients in place, we can now verify Theorem~\ref{thm:factorization} and derive the separability of resonance components for any arrangement.

\begin{proof}[Proof of Theorem~\ref{thm:factorization}]
The inclusion $\supseteq$ follows from Definition \ref{def:cent}, so it suffices to verify the inclusion $\subseteq$. We let $\omega\in \bigl(P\wedge V^{\vee}\bigr)\cap I^2(\A)$ and write, using the basis
\eqref{eq:PN},
\[
\omega=\sum_{i=2}^{k}(u_i-u_1)\wedge h_i , \qquad h_i\in V^{\vee}.
\]
Set $g_i\coloneqq h_i$ for $i\ge 2$ and $g_1\coloneqq -\sum_{i=2}^k h_i$, to obtain the more symmetric representation
\begin{equation}
\label{eq:omega-normal}
\omega=\sum_{i=1}^{k} u_i\wedge g_i\qquad\text{where}\qquad \sum_{i=1}^{k} g_i=0.
\end{equation}

With this notation, we verify the hypotheses \ref{P1}--\ref{P3} of Proposition \ref{prop:propagation} for $g=g_i$, for each block index $i=1, \ldots, k$.

\medskip

\noindent\emph{Condition \ref{P1}.} Since $\omega\in I^2(\A)\subseteq \ker \partial$ and
$\partial u_i=d$ for every $i$, applying $\partial$ to \eqref{eq:omega-normal} yields
\[
0=\partial \omega=\sum_{i=1}^k\bigl(d\, g_i-(\partial g_i)u_i\bigr)=d\sum_{i=1}^k g_i -\sum_{i=1}^k (\partial g_i)\, u_i
\overset{\eqref{eq:omega-normal}}{=}-\sum_{i=1}^k (\partial g_i)\, u_i.
\]
Since the vectors $u_i$ are linearly independent, it follows that $\partial g_i=0$ for all $i$.

\medskip

\noindent\emph{Condition \ref{P2}.} Decomposing $\omega$ with respect to \eqref{eq:brieskorn}, we get $\omega_X=\sum_i (u_i)_X\wedge (g_i)_X\in I^2(\A_X)$, and $\partial_X \omega_X=0$. If $X\in \XX$ then $\partial_X u_i=n_X$ and therefore
\[
0=\partial_X\omega_X
=n_X\sum_{i=1}^k (g_i)_X-\sum_{i=1}^k (\partial_X g_i)\,(u_i)_X \overset{\eqref{eq:omega-normal}}{=} -\sum_{i=1}^k (\partial_X g_i)\,(u_i)_X.
\]
The vectors $(u_i)_X$ are non-zero with pairwise
disjoint supports, hence $\partial_X g_i=0$ for all $i$.

\medskip

\noindent\emph{Condition \ref{P3}.} Suppose that
$Y\in L_2(\A)\setminus \XX$ meets the block $\BB_i$. By Definition~\ref{def:multinet}\eqref{m2} no other block can meet $Y$, so
$(u_l)_Y=0$ for $l\ne i$ and therefore
$\omega_Y=(u_i)_Y\wedge (g_i)_Y\in I^2(\A_Y)$. Since
$\partial_Y (u_i)=\sum_{H\in \BB_i,\, Y\subseteq H}m_H>0$,
Lemma \ref{lem:local-wedge}\ref{lw1} implies that $(g_i)_Y$ is proportional to $(u_i)_Y$.

\medskip

We can therefore apply Proposition \ref{prop:propagation} to conclude that $g_i\in \Cent_{\A}(P)$ for every $i$. In
particular $h_i=g_i\in \Cent_{\A}(P)$ for $i\ge 2$, hence
$\omega=\sum_{i\ge 2}(u_i-u_1)\wedge h_i\in P\wedge \Cent_{\A}(P)$.
\end{proof}

\begin{corollary}
\label{cor:separable-maximal}
If $\NN$ is a multinet on a subarrangement $\BB\subseteq \A$ such that $P_{\NN}$ is maximal among
the isotropic subspaces of $V^{\vee}$, then $P_{\NN}$ is separable.
\end{corollary}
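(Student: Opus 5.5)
The plan is to combine Theorem~\ref{thm:factorization} with Lemma~\ref{lem:rigidity}; no further computation should be needed. Set $P=P_{\NN}$. By Theorem~\ref{thm:multinet-spaces}(1), $P$ is isotropic. Since $\dim P=k-1\ge 2$ by Definition~\ref{def:multinet}, the hypotheses of Lemma~\ref{lem:rigidity} are met.

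First, maximality of $P$ among isotropic subspaces gives, via Lemma~\ref{lem:rigidity}, the equality $\Cent_{\A}(P)=P$. Second, Theorem~\ref{thm:factorization} yields
\[
\bigl(P\wedge V^{\vee}\bigr)\cap I^2(\A)=P\wedge \Cent_{\A}(P)=P\wedge P .
\]
Third, $P\wedge P$, the span of all $p\wedge p'$ with $p,p'\in P$, is precisely $\bw^2 P\subseteq \bw^2 V^{\vee}$. This gives the separability condition \eqref{eq:separable}, in fact with equality, because isotropy gives $\bw^2P\subseteq I^2(\A)$.

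There is no genuine obstacle, since the substantive work sits in Theorem~\ref{thm:factorization}. The only point to check is that the notation $P\wedge \Cent_{\A}(P)$ in \eqref{eq:factorization} denotes the linear span of the wedges, so that it specializes to $\bw^2P$. We would also remark that, combined with Theorem~\ref{thm:multinet-spaces}(2), this corollary immediately gives Theorem~\ref{thm:separable}: every positive-dimensional component is some $P_{\NN}$ and is maximal isotropic, hence isotropic and separable, i.e.\ strongly isotropic.
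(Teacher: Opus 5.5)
Your proposal is correct and follows the paper's proof. In both, $P_{\NN}$ is isotropic of dimension $k-1\ge 2$, Lemma~\ref{lem:rigidity} gives $\Cent_{\A}(P_{\NN})=P_{\NN}$, and Theorem~\ref{thm:factorization} then yields $\bigl(P_{\NN}\wedge V^{\vee}\bigr)\cap I^2(\A)=\bw^2 P_{\NN}$.
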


\begin{proof}
By Theorem~\ref{thm:multinet-spaces}, $P_{\NN}$ is isotropic of dimension $\geq 2$. Lemma \ref{lem:rigidity} and the maximality of $P_{\NN}$ implies that $P_{\NN}=\Cent_{\A}(P_{\NN})$. Separability of $P_{\NN}$ follows then from Theorem~\ref{thm:factorization}.
\end{proof}

\begin{proof}[Proof of Theorem~\ref{thm:separable}] 
Let $P$ denote a positive--dimensional irreducible component of $\RR(\A)$. By Theorem~\ref{thm:multinet-spaces}, $P=P_{\NN}$ as in \eqref{eq:PN} and it is isotropic. The conclusion then follows from Corollary~\ref{cor:separable-maximal}.
\end{proof}

\begin{proof}[Proof of Theorem~\ref{thm:main}]
By Theorem~\ref{thm:separable} and \eqref{eq:chenrk_gen}, $\dim W_q(\A)=
\sum_i \dim W_q(\overline V_i,0)$ for $q\ge \abs{\A}-3$, the sum being taken over the components of $\RR(\A)$. The conclusion follows by grouping the components by dimension and using \eqref{eq:chen_arr}.
\end{proof}


{\small{{\bf{Acknowledgments:}} 
Farkas was supported by the Berlin mathematics research center MATH+, by the Einstein Foundation Berlin and by the
ERC Advanced Grant SYZYGY (No. 834172).
Raicu was supported by the NSF Grant No.~2302341.
Suciu was supported in part by the Simons Foundation Collaboration Grant for
Mathematicians No.~693825 and by the project ``Singularities and Applications'' - 
CF 132/31.07.2023 funded by the European Union - NextGenerationEU - through 
Romania's National Recovery and Resilience Plan.}}

{\small{{\bf{AI usage:}} 
The idea of using the weighted Laplacians of the graphs $\Gamma_i$  was proposed by GPT-5.6 Sol after being prompted by the authors to verify the conditions in 
\cite[Proposition 9.4]{AFRSarx}, which ensure the separability of a resonance component. The paper was entirely written by the
authors, who are fully responsible for its correctness.
}}

\end{document}